\newtheorem{theorem}{Theorem}[section]
\newtheorem{corollary}{Corollary}[theorem]
\newtheorem{lemma}[theorem]{Lemma}
\newtheorem{proposition}[theorem]{Proposition}
\theoremstyle{definition}
\newtheorem{definition}{Definition}[section]
\theoremstyle{remark}
\theoremstyle{definition}
\newtheorem{example}{Example}[section]
\title{A generalization of the hexastix arrangement \newline to higher dimensions}
\author{Jan Kristian Haugland \\ \texttt{admin@neutreeko.net}}
\begin{document}

\maketitle

\section{Introduction}

Hexastix is an arrangement of non-overlapping infinite hexagonal prisms in four different directions that cover $\frac{3}{4}$ of space (confer \cite{conway}). We consider a possible generalization to $n$ dimensions, based on the permutohedral lattice $A^*_n$. The central lines of the generalized prisms are going to be oriented in $n+1$ different directions (parallel to the shortest non-zero vectors of $A^*_n$). The projection of the lines oriented in any direction along that direction to a hyperplane perpendicular to it is required to be a translation of the corresponding projection of $A^*_n$, and the minimal distance between lines oriented in any two given directions should be maximal. In Section 3 we prove that this is possible if $n$ is a prime power. In Section 4 we calculate the proportion of $n$-space that is covered for $n \in \{4, 5\}$ when the generalized prisms are maximized, and in Section 5 we briefly consider an alternative generalization.

\section{Preliminaries}

Predominantly following \cite{baek}, $H_d$ is the hyperplane $\{x \in \mathbb{R}^{d+1}|\sum x_i = 0\}$, $A_d = \mathbb{Z}^{d+1} \cap H_d$ and $A^*_d$ is the permutohedral lattice, consisting of all points $x \in H_d$ for which $x_i - x_j \in \mathbb{Z}$ for all $i, j \in \{1, \dots, d+1\}$. $S^i_d$ is the projection mapping $\mathbb{R}^d$ onto $\mathbb{R}^{d-1}$ by removing the $i$th coordinate, and $T_d$ is the projection mapping $\mathbb{R}^d$ onto $H_{d-1}$ along the vector $(1, \dotsc, 1)$.

Let $v^{(1)} = \left(\frac{n}{n+1}, -\frac{1}{n+1}, \dotsc, -\frac{1}{n+1}\right)$, $v^{(2)} = \left(-\frac{1}{n+1}, \frac{n}{n+1}, -\frac{1}{n+1}, \dotsc, -\frac{1}{n+1}\right)$, $\dotsc$, $v^{(n+1)} = \left(-\frac{1}{n+1}, \dotsc, -\frac{1}{n+1}, \frac{n}{n+1}\right)$, which together with their negatives are the shortest non-zero vectors of $A^*_n$. Note that the composite projection \newline $T_n \circ S^i_{n+1}$ maps $x \in H_n$ along $v^{(i)}$ to the hyperplane $x_i=0$ and then removes the zero at the $i$th coordinate, and that $$T_n \circ S^i_{n+1} \left( A_n \right) = T_n \circ S^i_{n+1} \left( A^*_n \right) = A^*_{n-1}$$ for any $i$. Thus with $n+1$ vectors $\{u^{(i)}\}_{i \leq n+1}$ such that $u^{(i)} \in H_n$, $u^{(i)}_i = 0$ for all $i$, we can let the central lines of the generalized prisms oriented in the $i$th direction be given by $L_i = u^{(i)} + A_n + \{t v^{(i)} | t \in \mathbb{R}\}$. We then have $$T_n \circ S^i_{n+1} \left( L_i \right) = S^i_{n+1} \left( u^{(i)} \right) + A^*_{n-1}$$

\begin{example}
For $n=3$, take $u^{(1)} = \left( 0, \frac{1}{4}, -\frac{1}{4}, 0 \right)$, $u^{(2)} = \left( -\frac{1}{4}, 0, \frac{1}{4}, 0 \right)$, $u^{(3)} = \left( \frac{1}{4}, -\frac{1}{4}, 0, 0 \right)$, $u^{(4)} = \left( 0, 0, 0, 0 \right)$. For each $i \in \{1, 2, 3, 4\}$, each line in $L_i$ can be encapsulated by an infinite hexagonal prism of inradius $\frac{\sqrt{2}}{8}$, yielding the hexastix arrangement. For example, the prism containing $\left( 0, 0, 0, 0 \right)$ is given by $\lvert x_1 - x_2 \rvert \leq \frac{1}{4}$, $\lvert x_1 - x_3 \rvert \leq \frac{1}{4}$ and $\lvert x_2 - x_3 \rvert \leq \frac{1}{4}$.
\end{example}

With $i \neq j$, let $$w = T_{n-1} \circ S^{\operatorname{min} \{i, j\}}_{n-1} \circ S^{\operatorname{max} \{i, j \}}_n \left( u^{(i)} - u^{(j)} \right)$$ The distance between a line in $L_i$ and a line in $L_j$ is the length of a vector of the form $w + P$ with $P \in A^*_{n-2}$. In other words, the minimal distance between some line in $L_i$ and some line in $L_j$ is $D_{n-2}(w)$, where $D_d(x)$ is defined as the distance from $x \in H_d$ to the nearest lattice point $y \in A^*_d$.

\begin{lemma}
The maximal value of $D_d(x)$ is $\sqrt{\frac{d(d+2)}{12(d+1)}}$, attained if and only if the coordinates of $x$ are uniformly distributed modulo 1.
\end{lemma}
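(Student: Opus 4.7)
The plan is to reduce the lemma to computing the circumradius of the Voronoi cell $V$ of $A^*_d$ at the origin. Since $D_d(x)$ is invariant under translation by $A^*_d$ and every coset of $A^*_d$ in $H_d$ has a representative in $V$, we have $\sup_x D_d(x) = \sup_{z \in V} |z|$. The strategy is to describe $V$ by explicit linear inequalities, identify its vertices, and use convexity to locate the maximum of $|z|^2$.

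I would use the lattice vectors $v_S := \sum_{i \in S} v^{(i)} \in A^*_d$ for nonempty proper subsets $S \subsetneq \{1,\ldots,d+1\}$. A short calculation using $\sum_j z_j = 0$ gives $\langle z, v_S \rangle = \sum_{i \in S} z_i$ and $|v_S|^2 = |S|(d+1-|S|)/(d+1)$, so the Voronoi inequality $|z|^2 \leq |z - v_S|^2$ becomes the linear condition $\sum_{i \in S} z_i \leq g(|S|)$ with $g(k) := k(d+1-k)/(2(d+1))$. The $v_S$ are exactly the Voronoi-relevant vectors, so these inequalities define $V$ (the permutohedron). Sorting $z_{(1)} \leq \cdots \leq z_{(d+1)}$, the rearrangement inequality reduces the binding constraints to $\sum_{j=1}^{k} z_{(j)} \geq -g(k)$ for $k=1,\ldots,d$; saturating all $d$ of them forces $z_{(k)} = (2k-d-2)/(2(d+1))$. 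Because $|z|^2$ is strictly convex and symmetric under coordinate permutations, its maximum on $V$ is attained precisely at the $(d+1)!$ permutations of this arithmetic progression, with common value $d(d+2)/(12(d+1))$ by a direct summation using $\sum_{k=0}^{d}(2k-d)^2 = d(d+1)(d+2)/3$.

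For the equality condition, observe that at a vertex the sorted coordinates form an arithmetic progression with step $1/(d+1)$, so their residues modulo $1$ are uniformly spaced. Any element of $A^*_d$ has all coordinates congruent modulo $1$, so $A^*_d$-translation preserves this uniform distribution; hence $D_d(x)$ attains its maximum iff the coordinates of $x$ are uniformly distributed modulo $1$. I expect the main technical obstacle to be confirming that the polytope cut out by the $v_S$-inequalities is actually the full Voronoi cell rather than a strictly larger set, i.e.\ that the $v_S$ exhaust the Voronoi-relevant vectors of $A^*_d$. This follows from the classical identification of the Voronoi cell of $A^*_d$ with the permutohedron whose facets are parametrized by nonempty proper subsets of $\{1,\ldots,d+1\}$; alternatively one may sidestep it by directly verifying $|z|^2 \leq |z - w|^2$ for the identified vertex $z$ and every $w \in A^*_d \setminus \{0\}$.
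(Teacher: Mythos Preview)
Your proposal is correct and follows essentially the same route as the paper: both identify $\max_x D_d(x)$ with the circumradius of the permutohedral Voronoi cell of $A^*_d$, locate the extremizers at the vertices (permutations of an arithmetic progression with step $1/(d+1)$), and handle the ``if and only if'' via the fact that translation by $A^*_d$ preserves the multiset of residues of the coordinates modulo $1$. The only difference is in level of detail: the paper simply quotes the known vertex description of the permutohedron and computes the vertex norm directly, whereas you re-derive the facet inequalities from the vectors $v_S$ and use rearrangement plus strict convexity of $|z|^2$ to pin down the vertices, making your argument somewhat more self-contained at the cost of the technical point you flag (that the $v_S$ exhaust the Voronoi-relevant vectors).
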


\begin{proof}
Let $\sigma(d) = \frac{d(d+1)(d+2)}{3}$. Since $\sigma(0) = 0^2$, $\sigma(1) = 2 \cdot 1^2$ and \newline $\sigma(d) - \sigma(d-2) = 2d^2$, it follows by induction on $d$ that $\sum_{i=1}^{d+1} \left({2(i-1) - d}\right)^2 = \sigma(d)$.

The Voronoi cell of the origin in $A^*_d$ is a permutohedron with vertices given by the permutations of $\left( -\frac{d}{2(d+1)}, -\frac{d-2}{2(d+1)}, \dotsc, \frac{d}{2(d+1)}\right)$, and it follows that the maximal value of the distance to a lattice point is $$\sqrt{\sum_{i=1}^{d+1} \left(\frac{2(i-1) - d}{2(d+1)}\right)^2} = \frac{\sqrt{\sigma(d)}}{2(d+1)} = \sqrt{\frac{d(d+2)}{12(d+1)}}$$ If the coordinates of $x$ are not uniformly distributed modulo 1, then neither are the coordinates of any point in $x+A^*_d$. On the other hand, if the coordinates of $x$ are uniformly distributed modulo 1, then there exists a vertex $y$ of that permutohedron (actually, $d+1$ vertices) such that the residues of $x_i - y_i$ modulo 1 are the same for all $i$, and it is easily verified that $x-y \in A^*_d$.
\end{proof}

\begin{corollary}
If $i \neq j$, the minimal distance between some line in $L_i$ and some line in $L_j$ is at most $\operatorname{max} D_{n-2}(x) = \sqrt{\frac{n(n-2)}{12(n-1)}}$, attained if and only if the coordinates of $u^{(i)} - u^{(j)}$, other than the $i$th and the $j$th, are uniformly distributed modulo 1. {\hfill \ensuremath{\Box}}
\end{corollary}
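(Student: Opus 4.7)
The plan is to invoke the preceding Lemma with dimension parameter $d = n-2$. The paragraph before the Lemma already identifies the minimal distance between a line in $L_i$ and a line in $L_j$ as $D_{n-2}(w)$, where $w = T_{n-1} \circ S^{\min\{i,j\}}_{n-1} \circ S^{\max\{i,j\}}_n(u^{(i)} - u^{(j)})$. Substituting $d = n-2$ into the Lemma's upper bound $\sqrt{d(d+2)/[12(d+1)]}$ yields exactly $\sqrt{n(n-2)/[12(n-1)]}$, which settles the inequality part of the statement.

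For the equality clause, I would first unpack what $w$ looks like coordinatewise. The two deletion maps $S^{\max\{i,j\}}_n$ and $S^{\min\{i,j\}}_{n-1}$ together remove precisely the entries at positions $i$ and $j$ from $u^{(i)} - u^{(j)}$, yielding a vector in $\mathbb{R}^{n-1}$. The projection $T_{n-1}$ then subtracts from every remaining entry a single common constant, namely their arithmetic mean. So $w$ differs from the tuple of non-$(i,j)$ entries of $u^{(i)} - u^{(j)}$ only by a uniform additive shift.

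The Lemma characterizes extremality as the coordinates of $w$ being uniformly distributed modulo $1$. The only thing left to verify is that this property is invariant under a simultaneous shift of all coordinates by the same real number. This is immediate from the description implicit in the Lemma's proof: being uniformly distributed mod $1$ means that the residues form an arithmetic progression of common difference $1/(n-1)$ in $\mathbb{R}/\mathbb{Z}$, which is obviously translation-invariant. Consequently, $D_{n-2}(w)$ achieves its maximum iff the entries of $u^{(i)} - u^{(j)}$ other than the $i$th and the $j$th are themselves uniformly distributed modulo $1$, as claimed.

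I foresee no real obstacle, as the Corollary is essentially a transcription of the Lemma in the coordinates relevant here. The only tiny observation required is the translation-invariance of the notion of uniform distribution modulo $1$, and this is forced by the form of the extremal set identified inside the proof of the Lemma.
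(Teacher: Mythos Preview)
Your proposal is correct and matches the paper's approach: the paper treats the corollary as immediate (it is stated with a terminal $\Box$ and no argument), and your write-up spells out exactly the two observations that make it so---substituting $d=n-2$ into the Lemma, and noting that the projection $T_{n-1}$ shifts all surviving coordinates of $u^{(i)}-u^{(j)}$ by the same constant, which does not affect whether they are uniformly distributed modulo~$1$.
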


Note that the minimal distance between two distinct \textit{parallel} lines is $$\sqrt{\left( 1 - \frac{1}{n} \right)^2 + (n-1) \frac{1}{n^2}} = \sqrt{1 - \frac{1}{n}}$$ which is smaller than  $\operatorname{max} D_{n-2}(x)$ when $n \geq 13$.

\begin{definition}
An \textit{extended cyclic permutation} maps $(y_1, \dotsc, y_d)$ to \newline $(y_{a + b}, y_{2a + b}, \dotsc, y_b)$ for some integers $a$, $b$ where $(a, d) = 1$ and the indices are considered modulo $d$.
\end{definition}

\noindent This will be applied when we consider the details for $n=5$ in Section 4.

\section{A family of optimal solutions}

Suppose $n$ is a prime power. Let $F=\{a_1, \dotsc, a_n\}$ be the field of order $n$, and let $\alpha$ be a primitive element of $F$. We define an $(n+1) \times (n+1)$ matrix $M$ as follows. If $i=j$ or $i=n+1$ or $j=n+1$, let $M_{i,j} = 0$. If $i \neq j$ and $i, j \leq n$, let $M_{i,j}$ be the discrete logarithm (between 0 and $n-2$ inclusive) of $a_i - a_j$ to the base $\alpha$.

\begin{proposition}
With $M$ defined as above, let $i \neq j$. Then the difference between the $i$th and the $j$th row of $M$, except for the $i$th and the $j$th column, contains all residues modulo $n-1$.
\end{proposition}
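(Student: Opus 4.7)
The plan is to split into two cases according to whether one of $i, j$ is the special index $n+1$, and in each case to read off the $n-1$ entries of the row difference as discrete logarithms of a carefully identified multiset in $F^*$.

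First I would handle the generic case $i, j \le n$. The columns that remain after deleting the $i$th and $j$th are $k \in \{1,\dotsc,n+1\} \setminus \{i,j\}$. The column $k=n+1$ contributes $0 - 0 = 0$. For $1 \le k \le n$ with $k \ne i, j$, the entry is
\[
M_{i,k} - M_{j,k} \;=\; \log_\alpha(a_i - a_k) - \log_\alpha(a_j - a_k) \;=\; \log_\alpha\!\Bigl(\tfrac{a_i - a_k}{a_j - a_k}\Bigr),
\]
which is well defined since $a_k \ne a_i, a_j$. The key observation is that the assignment $z \mapsto (a_i - z)/(a_j - z)$ extends to a M\"obius transformation of the projective line over $F$ that sends $a_i \mapsto 0$, $a_j \mapsto \infty$ and $\infty \mapsto 1$. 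Being a bijection of $\mathbb{P}^1(F)$, it restricts to a bijection from $F \setminus \{a_i, a_j\}$ onto $F \setminus \{0, 1\}$. Applying $\log_\alpha$ sends this image bijectively onto $\{1, 2, \dotsc, n-2\}$, and adjoining the $0$ coming from column $n+1$ yields every residue modulo $n-1$.

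Second, I would treat the boundary case where one index is $n+1$, say $j = n+1$. Here the $(n+1)$th row of $M$ is identically zero, so the row difference at column $k \in \{1,\dotsc,n\} \setminus \{i\}$ equals $M_{i,k} = \log_\alpha(a_i - a_k)$. As $a_k$ runs over $F \setminus \{a_i\}$, the quantity $a_i - a_k$ runs bijectively over $F^*$, whose image under $\log_\alpha$ is $\{0, 1, \dotsc, n-2\}$.

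The only non-routine step is the M\"obius-transformation observation in the generic case: one must notice that the two values $0$ and $1$ that are missing from the image of $z \mapsto (a_i - z)/(a_j - z)$ correspond precisely to the logarithmic residues $-\infty$ and $0$, and that the single genuine ``missing'' residue $0$ is exactly what the $(n+1)$th column contributes. Once this alignment is spotted, the rest is a direct unpacking of the definitions of $M$ and $\log_\alpha$.
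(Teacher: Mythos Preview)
Your proof is correct and follows essentially the same approach as the paper: the same case split on whether one index is $n+1$, the same observation that column $n+1$ supplies the residue $0$, and the same identification of the remaining differences with discrete logarithms of $(a_i-a_k)/(a_j-a_k)$. The only cosmetic difference is that you invoke the bijectivity of the M\"obius transformation $z\mapsto (a_i-z)/(a_j-z)$ on $\mathbb{P}^1(F)$ to conclude the image is $F\setminus\{0,1\}$, whereas the paper verifies injectivity and nonvanishing directly via the cross-multiplication identity $(a_i-a_r)(a_j-a_s)-(a_i-a_s)(a_j-a_r)=(a_i-a_j)(a_r-a_s)$; these are the same computation in different clothing.
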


\begin{proof}
Each of the first $n$ rows, when the corresponding and the $(n+1)$th column is removed, contains the discrete logarithms of all nonzero field elements. Thus, the statement is true if $i$ or $j$ is $n+1$, and we can assume that $i, j \in \{1, \dotsc, n\}$ henceforth. In this case, the difference in the $(n+1)$th column is zero, and it remains to verify that if $r \neq s$ and $r, s \in \{1, \dotsc, n\} \backslash \{i, j\}$, then $M_{i, r} - M_{j, r}$ and $M_{i, s} - M_{j, s}$ are distinct and nonzero residues $(\operatorname{mod} n - 1)$. They are nonzero since $M_{i, r}$ and $M_{j, r}$ are discrete logarithms of distinct nonzero field elements. In order to verify that they are distinct, note that $M_{i, r} - M_{j, r}$ is congruent $(\operatorname{mod} n - 1)$ to the discrete logarithm of $\frac{a_i - a_r}{a_j - a_r}$. Therefore, it suffices that $$\frac{a_i - a_r}{a_j - a_r} \neq \frac{a_i - a_s}{a_j - a_s}$$ which is equivalent to $$\left( a_i - a_r \right) \left( a_j - a_s \right) \neq \left( a_i - a_s \right) \left( a_j - a_r \right) \Longleftrightarrow \left( a_i - a_j \right) \left( a_r - a_s \right) \neq 0$$
\end{proof}

If a constant is added to each element of any row or any column, the residues will remain uniformly distributed. Thus, we can take

\[
u^{(i)}_j=
\begin{cases}
    0 &\quad\text{if }i = j \vee i = n + 1 \vee j = n + 1\\
    \frac{M_{i, j} + \frac{1}{2}}{n-1} - \frac{1}{2} &\quad\text{otherwise}
\end{cases}
\]

\noindent and the condition of Cor. 2.1.1 is satisfied for each pair of distinct indices.

\section{Generalized prisms}

Let $\{u^{(i)}\}$ be given as above. In order to compute the proportion of $n$-space that is covered, we can compare the $(n-1)$-volumes of the Voronoi cells of $(0, \dotsc, 0)$ for two different sets of points. Without loss of generality, we focus on the generalized prisms directed along $v^{(n+1)}$. Take the set of points $x \in H_{n-1}$ for which $x_i=0$ and $(x_1, \dotsc, x_{i-1}, x_{i+1}, \dotsc, x_n)$ is of the form $P + Q$ where $P \in A^*_{n-2}$ and $Q$ is obtained by removing the $i$th and the last coordinate of $u^{(i)}$, for some $i$. These are the nearest points of each line in $L_i$ to the line $\{tv^{(n+1)}|t \in \mathbb{R}\} \subset L_{n+1}$, projected to the hyperplane $x_{n+1}=0$ by removing the last coordinate. The union of these sets over $1 \leq i \leq n$ is the first of the two sets mentioned above. Then the projection of each generalized prism along the central line is isometric to the Voronoi cell $V$ of $(0, \dotsc, 0)$. Denote the $(n-1)$-volume of $V$ by $\lambda(V)$. If we had used just one direction and filled $n$-space with the generalized prisms, their projections would be the Voronoi cells of $A^*_{n-1}$, which is the second set that we are considering.

\begin{lemma}
The $(n-1)$-volume of each Voronoi cell of $A^*_{n-1}$ is $\frac{1}{\sqrt{n}}$.
\end{lemma}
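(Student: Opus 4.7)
The plan is to identify the $(n-1)$-volume of a Voronoi cell with the covolume (lattice determinant) of $A^*_{n-1}$ in $H_{n-1}$, since any Voronoi cell of a full-rank lattice is a fundamental domain. The cleanest route is to realize $A^*_{n-1}$ as the orthogonal projection $T_n(\mathbb{Z}^n)$ of the standard integer lattice and apply the standard covolume formula for projections.

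First I would verify $T_n(\mathbb{Z}^n) = A^*_{n-1}$. Each image $T_n(e_i) = e_i - \tfrac{1}{n}(1,\dots,1)$ has entry $\tfrac{n-1}{n}$ in position $i$ and $-\tfrac{1}{n}$ elsewhere, so $T_n(e_i) \in A^*_{n-1}$ (indeed, these are its shortest non-zero vectors, the analogues of the vectors $v^{(i)}$ from the preliminaries). The differences $T_n(e_i) - T_n(e_j) = e_i - e_j$ generate the root lattice $A_{n-1}$, and $k \cdot T_n(e_1) \in A_{n-1}$ iff $n \mid k$, so the class of $T_n(e_1)$ has order exactly $n$ in the cyclic quotient $A^*_{n-1}/A_{n-1} \cong \mathbb{Z}/n\mathbb{Z}$. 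Hence $T_n(e_1),\dots,T_n(e_n)$ together $\mathbb{Z}$-generate $A^*_{n-1}$; as they sum to zero, the first $n-1$ of them already form a $\mathbb{Z}$-basis.

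Finally I would invoke the projection formula: if $\Lambda \subset \mathbb{R}^n$ is a lattice and $V \subset \mathbb{R}^n$ a subspace in which $\Lambda \cap V$ has full rank, then the orthogonal projection of $\Lambda$ onto $V^{\perp}$ has covolume $\det(\Lambda)/\det(\Lambda \cap V)$. With $\Lambda = \mathbb{Z}^n$, $V = \mathbb{R}(1,\dots,1)$, $V^{\perp} = H_{n-1}$, and $\mathbb{Z}^n \cap V = \mathbb{Z}(1,\dots,1)$ of covolume $\sqrt{n}$ inside the line $V$, this yields $\det(A^*_{n-1}) = 1/\sqrt{n}$, as required. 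The only real subtlety is the $\mathbb{Z}$-spanning above, settled by the cyclic-quotient argument; as a self-contained alternative one can compute the Gram determinant of $T_n(e_1),\dots,T_n(e_{n-1})$ directly, as this Gram matrix equals $\tfrac{1}{n}(n I_{n-1} - J_{n-1})$ and its eigenvalues $1$ (simple) and $n$ (multiplicity $n-2$) give determinant $\tfrac{1}{n}$.
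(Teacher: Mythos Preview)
Your argument is correct and coincides with the paper's own alternative proof: realize $A^*_{n-1}$ as the orthogonal projection $T_n(\mathbb{Z}^n)$ and read off the covolume $1/\sqrt{n}$ from the fiber length $\lvert(1,\dots,1)\rvert=\sqrt n$ (the paper also gives a one-line derivation by citing a known formula for the scaled Voronoi volume). One small slip: in your Gram-matrix check the eigenvalues of $\tfrac{1}{n}(nI_{n-1}-J_{n-1})$ are $\tfrac{1}{n}$ (simple) and $1$ (multiplicity $n-2$), not $1$ and $n$; the determinant $\tfrac{1}{n}$ is of course unchanged.
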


\begin{proof}
According to Prop. 2.11 in \cite{baek}, the $d$-volume of a Voronoi cell of $A^*_{d}$ that has been scaled by a factor $d+1$ is $(d+1)^{d - \frac{1}{2}}$, and the result follows. An alternative approach is to note that $A^*_{n-1}$ is a projection of $\mathbb{Z}^n$, in which each Voronoi cell has $n$-volume 1, to $H_{n-1}$. The points that are mapped to the same point in the projection lie uniformly distributed on a line with a distance $\sqrt{n}$ between consecutive points.
\end{proof}

\begin{corollary}
The proportion of $n$-space that is covered by the arrangement is $(n+1) \sqrt{n} \lambda (V)$. {\hfill \ensuremath{\Box}}
\end{corollary}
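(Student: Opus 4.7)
The plan is to compute the fraction of $n$-space occupied by the prisms oriented in a single direction, then multiply by $n+1$, exploiting the symmetry of the construction and the fact that prisms in distinct directions are pairwise disjoint.

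First I would fix attention on the prisms directed along $v^{(n+1)}$ (without loss of generality, since the $n+1$ directions are symmetric in the setup). Projecting along $v^{(n+1)}$ onto the hyperplane $x_{n+1}=0$ and identifying this hyperplane with $H_{n-1}$ by removing the last coordinate, the centers of the parallel lines in $L_{n+1}$ project exactly to a translate of $A^*_{n-1}$. By Lemma 4.1, a Voronoi cell of $A^*_{n-1}$ has $(n-1)$-volume $1/\sqrt{n}$, so this is the area of a fundamental domain for the projected centers.

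Next I would use the observation made in the paragraph preceding Lemma 4.1: each generalized prism in direction $v^{(n+1)}$ is a straight cylinder whose cross-section is isometric to the cell $V$. Hence the proportion of the projection hyperplane covered by the cross-sections is
\[
\frac{\lambda(V)}{1/\sqrt{n}} \; = \; \sqrt{n}\,\lambda(V),
\]
and, since the prisms are cylinders, this is also the proportion of $n$-space covered by the prisms in the single direction $v^{(n+1)}$.

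Finally I would sum over the $n+1$ directions. The construction is symmetric in $i$ (each direction plays the role of $v^{(n+1)}$ up to isometry of $A^*_n$, and the matrix $M$ together with Corollary 2.1.1 ensures the uniform-residue condition holds for every pair $i,j$), so each direction contributes the same amount $\sqrt{n}\,\lambda(V)$. Moreover, the prisms in different directions are pairwise disjoint: this is built into the definition of $V$ as the Voronoi cell of the origin against the projected nearest points of lines in all other directions, which forces the cylinder in one direction to stop exactly where a line from another direction would begin to be closer. Adding the contributions yields the claimed proportion $(n+1)\sqrt{n}\,\lambda(V)$. The only point requiring any care is the mutual disjointness across directions and the invariance of $\lambda(V)$ under the choice of distinguished direction; both follow from the Voronoi characterization of $V$ and the symmetric role of the $u^{(i)}$.
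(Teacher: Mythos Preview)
Your argument is exactly the one the paper has in mind: the corollary is stated in the paper with no proof beyond the end-of-proof box, the preceding paragraph having already set up the comparison between $\lambda(V)$ and the volume $1/\sqrt{n}$ of the Voronoi cell of $A^*_{n-1}$ from Lemma~4.1. Your additional remarks on disjointness across directions and on the invariance of $\lambda(V)$ under the choice of distinguished direction make explicit points the paper leaves implicit, but the core computation---one direction contributes $\sqrt{n}\,\lambda(V)$, and there are $n+1$ directions---is identical.
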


\begin{example}
For $n=4$, $V$ is given by $x_i - x_j \leq \frac{1}{3}$ for all $i \neq j$. It follows that $V$ is the rhombic dodecahedron with vertices given by the permutations of $\pm \left( \frac{1}{4}, -\frac{1}{12}, -\frac{1}{12}, -\frac{1}{12} \right)$ and $\left( \frac{1}{6}, \frac{1}{6}, -\frac{1}{6}, -\frac{1}{6} \right)$, which has volume $\frac{2}{27}$. Thus, $\frac{20}{27} \approx 0.74074$ of hyperspace is covered.
\end{example}

\begin{example}
For $n=5$, $V$ is given by $p_1 x_1 + \dotsc + p_5 x_5 \leq \frac{5}{4}$ for all extended cyclic permutations $p$ of $(0, 1, 3, -1, -3)$. It follows that $V$ is a polytope with vertices given by the extended cyclic permutations of $$\pm \left( \frac{1}{3}, -\frac{1}{12}, -\frac{1}{12}, -\frac{1}{12}, -\frac{1}{12}\right)$$ $$\left( \frac{1}{4}, -\frac{1}{4}, -\frac{1}{12}, \frac{1}{6}, -\frac{1}{12} \right)$$ $$\left( \frac{1}{12}, \frac{1}{12}, -\frac{7}{36}, \frac{2}{9}, -\frac{7}{36}\right)$$ $$\left( \frac{11}{52}, \frac{11}{52}, -\frac{9}{52}, -\frac{1}{13}, -\frac{9}{52}\right)$$ $$\left( \frac{13}{76}, \frac{13}{76}, \frac{3}{76}, -\frac{7}{76}, -\frac{11}{38} \right)$$ $$\left( \frac{133}{516}, -\frac{37}{516}, \frac{83}{516}, -\frac{13}{129}, -\frac{127}{516}\right)$$ of hypervolume $\frac{8,093,125}{330,355,584} \sqrt{5}$. Thus, $\frac{40,465,625}{55,059,264} \approx 0.73495$ of 5-space is covered.
\end{example}

\section{Another possible generalization}

Instead of $n+1$ directions in $H_n$, we can take $2^{n-1}$ directions in $\mathbb{R}^n$ in a natural manner when $n \geq 4$. The directions $\{ v^{(i)} \}$ can be given as $(\pm 1, \dotsc, \pm 1, 1)$ (we can assume that the last coordinate is positive), and we give a set of vectors $\{u^{(i)}\}$ as before such that the central lines are given by $L_i = u^{(i)} + \mathbb{Z}^n + \{ tv^{(i)} | t \in \mathbb{R}\}$. For $i \neq j$, let $w$ be given by $w_k = v^{(i)}_k \left(u^{(i)}_k - u^{(j)}_k\right)$ for $1 \leq k \leq n$. This vector is split in two - one component $w'$ of dimension $n'$ corresponding to the indices $k$ for which $v^{(i)}_k = v^{(j)}_k$, and one component $w''$ of dimension $n''$ corresponding to the indices $k$ for which $v^{(i)}_k \neq v^{(j)}_k$. The minimal distance between lines oriented in the directions $v^{(i)}$ and $v^{(j)}$ is then $$\sqrt{\left(D_{n'-1}(T_{n'}(w'))\right)^2 + \left(D_{n''-1}(T_{n''}(w''))\right)^2}$$

We think the max min of the distance between lines in different directions is $\frac{\sqrt{5}}{6}$ for $n=4$, attained by taking $$u^{(i)}=\left(\frac{v^{(i)}_2}{4}-\frac{v^{(i)}_1}{6}, -\frac{v^{(i)}_2\left(3v^{(i)}_3+1\right)}{12}, \frac{v^{(i)}_1}{4}, 0\right)$$ for each $i$. For $n=5$, 6, 7 and 8 we can get at least $\frac{1}{\sqrt{6}}$, $\frac{1}{\sqrt{6}}$, $\frac{\sqrt{3}}{4}$ and $\frac{\sqrt{3}}{4}$ respectively, all attainable with $u^{(i)}_j \in \{0, \frac{1}{2} \}$, but better solutions may well exist.

\end{document}